\newtheorem{theorem}{Theorem}[section]
\newtheorem{lemma}[theorem]{Lemma}
\newtheorem{proposition}[theorem]{Proposition}
\newtheorem{corollary}[theorem]{Corollary}
  \newtheorem{remark}[theorem]{Remark}
\newenvironment{proof}{    
  \noindent
  \textbf{Proof.}}{
  \hfill $\Box$
  \vspace{3mm}
}
\numberwithin{equation}{section}
\newcommand{\N}{\mathbb{N}} 
\newcommand{\R}{\mathbb{R}} 
\newcommand{\C}{\mathbb{C}} 
\newcommand{\D}{\mathbb{D}} 
 \DeclareMathOperator{\re}{Re}
\newcommand{\eps}{\varepsilon}
\begin{document}

\title{The differentiation operator in the space of uniformly convergent Dirichlet series}

\author{Jos\'{e} Bonet}

\date{}

\maketitle

\begin{abstract}
Continuity, compactness, the spectrum and ergodic properties of the differentiation  operator are investigated, when it acts in the Fr\'echet space of all Dirichlet series that are uniformly convergent in all half-planes $\{s \in \mathbb{C} \ | \ {\rm Re} s > \varepsilon   \}$ for each $\varepsilon>0$. The properties of the formal inverse of the differentiation are also investigated.
\end{abstract}

\renewcommand{\thefootnote}{}
\footnotetext{\emph{2010 Mathematics Subject Classification.}
Primary: 47B38, secondary: 30B50; 46A04; 46E10; 47A16; 47A35  }%
\footnotetext{\emph{Key words and phrases.} Differentiation operator; integration operator; spaces of Dirichlet series; abscissas of convergence;  Fr\'echet space}
\footnotetext{This article is accepted for publication in Mathematische Nachrichten}%


\section{Introduction and preliminaries}

The Fr\'echet space $\mathcal{H}^{\infty}_{+}$ of Dirichlet series $f(s)= \sum_{n=1}^{\infty} a_n n^{-s}$ which are uniformly convergent on  the half-planes $\C_{\eps}:= \{s \in \C \ | \ \re s > \eps   \}$ for each $\eps>0$ was investigated by the author in \cite{Bonet}. When endowed with its natural metrizable locally convex topology, it is Schwartz, not nuclear, has a Schauder basis and contains isomorphically the space $H(\D^m)$ of analytic functions on the open unit polydisc $\D^m$ for each $m \in \N$. Moreover, this space is a multiplicatively convex Fr\'echet algebra for the pointwise product.

A Dirichlet series is a series of the form $f(s)= \sum_{n=1}^{\infty} a_n n^{-s}$ with complex
coefficients $a_n$ and variable $s \in  \mathbb{C}$. Define $\C_r:= \{s \in \C \ | \ \re s > r   \}$ for $r \in \R$. The abscissas of convergence, uniform convergence and
absolute convergence of $f$ are defined as follows (see \cite{Apostol}, \cite{DefantGMS} and \cite{Queffelec_book}):

$$
\sigma_c(f):= \inf \{ r \ \big| \  \sum_{n=1}^{\infty} a_n n^{-s} \ {\rm converges  }  \ {\rm on } \ \C_r \},
$$
$$
\sigma_u(f):= \inf \{ r \ \big| \ \sum_{n=1}^{\infty} a_n n^{-s} \ {\rm converges \ uniformly  }  \ {\rm on } \ \C_r \},
$$
$$
\sigma_a(f):= \inf \{ r \ \big| \ \sum_{n=1}^{\infty} a_n n^{-s} \ {\rm converges \ absolutely }  \ {\rm on } \ \C_r \}.
$$
Here the infima are taken in the extended real line. When the Dirichlet series is nowhere convergent, the three abscissas are $+\infty$. We have $-\infty \leq \sigma_c(f) \leq \sigma_u(f) \leq \sigma_a(f) \leq \infty$. By
a classical result of Bohr, that is of central importance in the study of Dirichlet series, if $f$ is a bounded analytic function on the half-plane $\C_0$ and it can be represented as a convergent Dirichlet series $f(s)= \sum_{n=1}^{\infty} a_n n^{-s}$ for $\re s$ large enough, then the Dirichlet series converges uniformly on each half-plane $\C_{\eps}:= \{s \in \C \ | \ \re s > \eps   \}, \eps >0$; see Theorem 6.2.3 in \cite{Queffelec_book} and \cite{Bohr_Crelle}. This means that the abscissa of uniform convergence $\sigma_u(f)$ of $f$ satisfies $\sigma_u(f) \leq  0$. It implies that the set of Dirichlet series $f(s)= \sum_{n=1}^{\infty} a_n n^{-s}$ such that $\sigma_u(f) \leq  0$ coincides with the set of holomorphic functions on $\C_0$ that are bounded on $\C_{\eps}$ for each $\eps >0$ and that can be represented as a convergent Dirichlet series in $\C_0$. As another consequence, the space $\mathcal{H}^{\infty}_{+}$ contains the Banach space $\mathcal{H}^{\infty}$ of all Dirichlet series $f(s)= \sum_{n=1}^{\infty} a_n n^{-s}$ that converges to a bounded analytic function on $\C_0$; it is endowed with the norm $||f||:= \sup_{s \in \C_0} |f(s)|$.

A Dirichlet series $f(s)= \sum_{n=1} a_n n^{-s}$ defines an analytic function in its half-plane of convergence $\sigma > \sigma_c(f)$ and its derivative is represented in this half-plane by the Dirichlet series
$$
f'(s) = D(f)(s) = - \sum_{n=1}^{\infty} \frac{a_n \log n}{n^s} = - \sum_{n=2}^{\infty} \frac{a_n \log n}{n^s},
$$
differentiating term by term. Moreover, the Dirichlet series $f'(s)$ has the same abscissa of convergence and absolute convergence as the series $f(s)$. See for example Theorem 11.12 in \cite{Apostol}. It is well-known that the differentiation operator $D(f):=f'$ does not act (continuously) on the Banach space $\mathcal{H}^{\infty}$. Indeed, if $D(\mathcal{H}^{\infty}) \subset \mathcal{H}^{\infty}$, the operator $D: \mathcal{H}^{\infty} \rightarrow \mathcal{H}^{\infty}$ is continuous by the closed graph theorem. However, $||n^{-s}|| = 1$ and $||D(n^{-s})||= \log n$ for each $n \in \N$.

The purpose of this note is to investigate the differentiation operator $D$ on the Fr\'echet space $\mathcal{H}^{\infty}_{+}$ and its inverse $J$ defined on the subspace of all $f(s) = \sum_{n=1}^{\infty} a_n n^{-s}$ such that $a_1=0$. Continuity and compactness of $D$ and $J$ are studied in Theorem \ref{contdiff}, the spectra of these operators are determined in Theorem \ref{spectrum} and mean ergodicity and hypercyclicity are treated in Proposition \ref{ergodic}. Corollary \ref{volterra} presents a characterization of continuous Volterra operators on $\mathcal{H}^{\infty}_{+}$.

The general theory of Dirichlet series was developed at the beginning of the last century by Bohr, Hardy, Landau and Riesz, among others. This  field showed remarkable advances recently, in particular combining functional analytical and complex analytical tools. We refer to the books \cite{DefantGMS_book}, \cite{Helson},   \cite{Queffelec_book}, the articles \cite{Boas_Football}, \cite{DefantGMS}, \cite{Hedenmalm}, \cite{Queffelec} and \cite{Queffelec1}, and the references therein for more information. If $\Omega$ is an open subset of $\C$, $H(\Omega)$ is the space of all holomorphic functions on $\Omega$
endowed with the Fr\'echet topology of uniform convergence on the compact subsets of $\Omega$. Our notation for locally convex spaces, Banach spaces and functional analysis is standard.  See e.g.\ \cite{Edwards}, \cite{Jarchow} and \cite{Meise_Vogt}. Necessary definitions will be recalled when needed later in the article.

\section{Differentiation operator on $\mathcal{H}^{\infty}_{+}$}\label{sect1}

As in \cite{Bonet}, we denote by $\mathcal{H}^{\infty}_{+}$ the space of all analytic functions on the half-plane $\C_0$ which are bounded on $\C_{\eps}$ for each $\eps > 0$ and that can be represented
as a convergent Dirichlet series $f(s)= \sum_{n=1}^{\infty} a_n n^{-s}$ in $\C_0$. An analytic function $f(s)= \sum_{n=1}^{\infty} a_n n^{-s}$, which is a Dirichlet series convergent in $\C_0$ belongs to $\mathcal{H}^{\infty}_{+}$ if and only if the series $\sum_{n=1}^{\infty} a_n n^{-s}$ converges uniformly on $\C_{\varepsilon}$ for each $\eps >0$. The space $\mathcal{H}^{\infty}_{+}$ is endowed with the metrizable locally convex topology defined by the system of seminorms
$$
P_{\eps}(f):= \sup_{s \in \C_{\eps}} |f(s)|, \ \ \ \ \  f \in \mathcal{H}^{\infty}_{+}.
$$
Endowed with this topology $\mathcal{H}^{\infty}_{+}$ is a Fr\'echet space, i.e.\ a complete metrizable locally convex space. It was proved in \cite{Bonet} that $\mathcal{H}^{\infty}_{+}$ is Schwartz, non-nuclear and the Dirichlet monomials $e_n(s)=n^{-s}, n \in \N$ are a Schauder basis of the space. In fact, \cite[Theorem 6.1.1]{Queffelec_book} implies that all coefficient functionals $u_n: \mathcal{H}^{\infty}_{+} \rightarrow \C, f \rightarrow a_n$ of the Dirichlet monomials $e_n$ are continuous. In particular, the subspace $\mathcal{H}^{\infty}_{+,0}$ of all $f(s) = \sum_{n=1}^{\infty} a_n n^{-s} \in \mathcal{H}^{\infty}_{+}$ such that $a_1=0$ is closed in $\mathcal{H}^{\infty}_{+}$. This fact is also a consequence of Theorem 11.2 in \cite{Apostol}.

We recall the following Abel's tests that will be useful in the rest of the article. See Theorem 7.36 in \cite{Stromberg}.

\begin{lemma}\label{Abeltest}
Let $K$ be a nonvoid set and let $(x_n)_{n=1}^{\infty}$ and $(y_n)_{n=1}^{\infty}$ be sequences of bounded complex valued functions on $K$. Define $X_n:= \sum_{k=1}^{n} x_k,  \ n \in \N$. Then $\sum_{n=1}^{\infty} x_n y_n$ converges uniformly on $K$ if any of the following hypothesis is satisfied:

(i) $(X_n)_{n=1}^{\infty}$ converges uniformly on $K$, $(y_n)_{n=1}^{\infty}$ is a real valued monotone sequence that is uniformly bounded on $K$.

(ii) $(X_n)_{n=1}^{\infty}$ converges uniformly on $K$, $(y_n)_{n=1}^{\infty}$ is uniformly bounded on $K$ and the sequence $\big(\sum_{n=1}^k |y_n - y_{n+1} | \big)_{k=1}^{\infty}$ is also uniformly bounded on $K$.
\end{lemma}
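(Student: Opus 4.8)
The plan is to verify the Cauchy criterion for uniform convergence: I will show that under either hypothesis the partial sums $S_N := \sum_{n=1}^{N} x_n y_n$ form a uniformly Cauchy sequence on $K$. Since the space of bounded complex-valued functions on $K$ equipped with the supremum norm $\|g\| := \sup_{x \in K} |g(x)|$ is a Banach space, this yields uniform convergence of $\sum_{n=1}^{\infty} x_n y_n$.

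First I would reduce (i) to (ii). Suppose $(y_n)$ is real-valued, uniformly bounded with $M := \sup_n \|y_n\| < \infty$, and such that $(y_n(x))_n$ is monotone for each $x \in K$. Then for every $x$ the sum $\sum_{n=1}^{k} |y_n(x) - y_{n+1}(x)|$ telescopes to $|y_1(x) - y_{k+1}(x)| \le 2M$, so the sequence $\big(\sum_{n=1}^{k} |y_n - y_{n+1}|\big)_k$ is uniformly bounded and the hypotheses of (ii) are met. Hence it suffices to prove (ii). Put $M := \sup_n \|y_n\| < \infty$ and $L := \sup_k \big\| \sum_{n=1}^{k} |y_n - y_{n+1}| \big\| < \infty$; being partial sums of a series of nonnegative functions, these also satisfy $\big\| \sum_{n=p}^{q-1} |y_n - y_{n+1}| \big\| \le L$ for all $q > p$.

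The computational core is summation by parts. A uniformly convergent sequence of bounded functions is uniformly bounded, so $X := \lim_n X_n$ is a bounded function; set $X_0 := 0$ and $R_n := X_n - X$, so that $\|R_n\| \to 0$ and $x_n = R_n - R_{n-1}$ for all $n \ge 1$. Abel's rearrangement then gives, for $q > p \ge 1$,
$$\sum_{n=p}^{q} x_n y_n = R_q y_q - R_{p-1} y_p + \sum_{n=p}^{q-1} R_n (y_n - y_{n+1}),$$
and therefore, using the bounds above,
$$\Big\| \sum_{n=p}^{q} x_n y_n \Big\| \le M \|R_q\| + M \|R_{p-1}\| + L \sup_{n \ge p-1} \|R_n\| \le (2M + L) \sup_{n \ge p-1} \|R_n\|.$$
Given $\eps > 0$, choose $N_0$ with $\sup_{n \ge N_0} \|R_n\| < \eps/(2M + L + 1)$; then the right-hand side is $< \eps$ for all $q > p \ge N_0 + 1$. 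Thus $(S_N)$ is uniformly Cauchy and the lemma follows.

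The argument is completely standard --- it is Theorem 7.36 in \cite{Stromberg} --- and the only points that need a little attention are the index bookkeeping in the summation-by-parts identity (keeping $R_0$ harmless by invoking the Cauchy criterion only for $p \ge 2$) and the observation that the truncated variation sums inherit the bound $L$. Neither is a genuine obstacle; I would include a proof here only because both parts of the test are used repeatedly in what follows.
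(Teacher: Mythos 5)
Your proposal is correct: the reduction of (i) to (ii) via the telescoping bound $\sum_{n=1}^{k}|y_n-y_{n+1}|=|y_1-y_{k+1}|\le 2M$ (valid pointwise even if the direction of monotonicity depends on the point), followed by summation by parts and the uniform Cauchy criterion, is precisely the standard argument behind Theorem 7.36 of Stromberg, which the paper cites without reproducing a proof. So you have supplied, correctly, the very proof the paper delegates to the reference; there is nothing to fix.
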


\begin{lemma}\label{lemma1}
Let $(\gamma_n)_{n=2}^{\infty}$ be a sequence of non-zero complex numbers such that $\lim_{n \rightarrow \infty} \frac{\log|\gamma_n|}{\log n}=0$. Then the linear operator $T(\sum_{n=2}^{\infty} a_n n^{-s}):= \sum_{n=2}^{\infty} a_n \gamma_n n^{-s}$ maps each $\sum_{n=2}^{\infty} a_n n^{-s} \in \mathcal{H}^{\infty}_{+,0}$ into a Dirichlet series with absolute abscissa less or equal than $1$. Moreover, if $T(\mathcal{H}^{\infty}_{+,0}) \subset \mathcal{H}^{\infty}_{+,0}$, then $T$ has closed graph, hence it is continuous.
\end{lemma}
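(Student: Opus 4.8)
The plan is to prove the statement in two parts, exactly mirroring its structure. \textbf{First part: the absolute abscissa bound.} Take $f(s)=\sum_{n=2}^\infty a_n n^{-s}\in\mathcal{H}^\infty_{+,0}$. Since the Dirichlet monomials form a Schauder basis with continuous coefficient functionals, and since $\sigma_u(f)\le 0$, the partial sums are uniformly bounded on $\C_\eps$ for each $\eps>0$; in particular I can extract from the theory of Dirichlet series (or directly from uniform convergence on $\C_\eps$) that $\limsup_n \frac{\log|a_2+\dots+a_n|}{\log n}\le \eps$ for every $\eps>0$, i.e. the summatory function $A_n:=\sum_{k=2}^n a_k$ satisfies $A_n = O(n^\delta)$ for every $\delta>0$. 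Actually it is cleaner to use that each $a_n$ itself satisfies $|a_n|\le P_\eps(f)\, n^\eps$ (apply the coefficient functional estimate / Bohr), so $\limsup_n\frac{\log|a_n|}{\log n}\le 0$. Combined with $\lim_n\frac{\log|\gamma_n|}{\log n}=0$, I get $\limsup_n\frac{\log|a_n\gamma_n|}{\log n}\le 0$, and since the absolute abscissa of $\sum b_n n^{-s}$ equals $\limsup_n \frac{\log|b_1+\dots+b_n|}{\log n}$ (bounded by $1+\limsup_n\frac{\log|b_n|}{\log n}$ via the crude bound $|b_1+\dots+b_n|\le n\max_{k\le n}|b_k|$), the series $T(f)$ has absolute abscissa $\le 1$. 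This shows $T$ is well-defined as a map into Dirichlet series convergent (indeed absolutely) on $\C_1$.

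\textbf{Second part: closed graph hence continuity.} Assume $T(\mathcal{H}^\infty_{+,0})\subseteq \mathcal{H}^\infty_{+,0}$. Since $\mathcal{H}^\infty_{+,0}$ is a closed subspace of the Fréchet space $\mathcal{H}^\infty_{+}$, it is itself a Fréchet space, so the closed graph theorem for Fréchet spaces applies and it suffices to check that the graph of $T$ is closed. Suppose $f_j\to f$ in $\mathcal{H}^\infty_{+,0}$ and $T(f_j)\to g$ in $\mathcal{H}^\infty_{+,0}$. Convergence in $\mathcal{H}^\infty_{+}$ implies convergence of each coefficient: if $f_j=\sum a_n^{(j)}n^{-s}$, $f=\sum a_n n^{-s}$, then $a_n^{(j)}\to a_n$ for every $n$ because the coefficient functionals $u_n$ are continuous. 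Likewise, writing $g=\sum c_n n^{-s}$, the $n$-th coefficient of $T(f_j)$ is $a_n^{(j)}\gamma_n$ and it converges to $c_n$; but $a_n^{(j)}\gamma_n\to a_n\gamma_n$, so $c_n=a_n\gamma_n$ for all $n\ge 2$ (and $c_1=0$). Hence $g=T(f)$, the graph is closed, and $T$ is continuous.

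\textbf{Anticipated main obstacle.} The only genuinely delicate point is the very first estimate, namely justifying the coefficient growth bound $\limsup_n\frac{\log|a_n|}{\log n}\le 0$ (equivalently the summatory-function version) for arbitrary $f\in\mathcal{H}^\infty_{+,0}$; everything downstream is a routine application of $\limsup$ arithmetic and the closed graph theorem. For this I would invoke the quoted result \cite[Theorem 6.1.1]{Queffelec_book}, which gives the continuity of the coefficient functionals together with the quantitative bound $|a_n|\le C_\eps P_\eps(f)\, n^{\eps}$ (this is essentially Bohr's inequality / the standard estimate $|a_n|\le \sup_{\re s=\eps}|f(s)|\cdot n^{\eps}$), valid for every $\eps>0$. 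Taking logs, dividing by $\log n$, letting $n\to\infty$ and then $\eps\to 0$ yields the claim; I should remember to treat the trivial cases ($\gamma_n$ or $a_n$ small) so that $\log|\cdot|$ stays meaningful, but since the $\gamma_n$ are assumed non-zero and the estimate only needs an \emph{upper} bound on $\limsup$, these cause no real difficulty.
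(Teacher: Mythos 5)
Your proof is correct and follows essentially the same route as the paper: the closed-graph half is identical (coefficientwise convergence via the continuous coefficient functionals $u_n$, then the closed graph theorem on the Fréchet space $\mathcal{H}^{\infty}_{+,0}$). For the abscissa bound the paper argues slightly differently: it uses the classical Bohr--Cahen inequality $\sigma_a(f)\leq \sigma_c(f)+1\leq \sigma_u(f)+1\leq 1$ to get $\sum_{n\geq 2}|a_n|n^{-1-\eps}<\infty$ and then absorbs $|\gamma_n|\leq n^{\delta}$ with $\delta<\eps$, whereas you use the coefficient estimate $|a_n|\leq P_{\eps}(f)\,n^{\eps}$; both yield $|a_n\gamma_n|=O(n^{\eta})$ for every $\eta>0$ and hence $\sigma_a(Tf)\leq 1$, so the difference is cosmetic. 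One small slip worth noting: the formula you quote, $\limsup_n \frac{\log|b_1+\cdots+b_n|}{\log n}$, is Cahen's formula for the abscissa of convergence (in the divergent case), not the absolute abscissa, which requires $|b_1|+\cdots+|b_n|$ inside the logarithm; this does not affect your argument, since $|b_n|=O(n^{\eta})$ for all $\eta>0$ already gives $\sum_n |b_n|\,n^{-\sigma}<\infty$ for every $\sigma>1$ directly.
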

\begin{proof}
For each $f(s)=\sum_{n=2}^{\infty} a_n n^{-s} \in \mathcal{H}^{\infty}_{+,0}$, we have $\sigma_u(f)\leq 0$, hence $\sigma_a(f) \leq 1$ and $\sum_{n=2}^{\infty} \frac{|a_n|}{n^{1+\eps}} < \infty$ for each $\eps >0$. By assumption for each $\delta>0$ there is $n(\delta) \in \N$ such that $|\gamma_n| < n^{\delta}$ for each $n \geq n(\delta)$. This implies $\sum_{n=2}^{\infty} \frac{|a_n \gamma_n|}{n^{1+\eps}} < \infty$ for each $\eps >0$. Thus $\sigma_a\big( \sum_{n=2}^{\infty} a_n \gamma_n n^{-s} \big) \leq 1$.

Now assume that $T(\mathcal{H}^{\infty}_{+,0}) \subset \mathcal{H}^{\infty}_{+,0}$. If we suppose that
$$
\lim_{k \rightarrow \infty} \sum_{n=2}^{\infty} a_n^k n^{-s} = \sum_{n=2}^{\infty} a_n^0 n^{-s} \ \ {\rm in} \ \ \mathcal{H}^{\infty}_{+,0}
$$
and
$$
\lim_{k \rightarrow \infty} \sum_{n=2}^{\infty} a_n^k \gamma_n n^{-s} = \sum_{n=2}^{\infty} b_n^0 n^{-s} \ \ {\rm in} \ \ \mathcal{H}^{\infty}_{+,0}.
$$
Proceeding as in the proof of Theorem 2.2 in \cite{Bonet}, we get $\lim_{k \rightarrow \infty} a_n^k = a_n^0$ and $\lim_{k \rightarrow \infty} a_n^k \gamma_n = b_n^0$ for each $n$. Therefore $\gamma_n a_n^0 = b_n^0$ and $T$ has closed graph.
Since $\mathcal{H}^{\infty}_{+,0}$ is a Fr\'echet space, the closed graph theorem implies that $T$ is continuous.
\end{proof}

A continuous linear operator $T:X \rightarrow X$ on a Fr\'echet space $X$ is called
\textit{bounded} (resp.\ \textit{compact}) if there exists a neighbourhood $ U $
of $ 0 \in X $ such that $ T (U) $ is a bounded (resp.\ relatively compact) subset of $ X .$ If $ X $ is Montel (i.e., each bounded set is relatively compact), then $ T$ is compact if and only if it is bounded.
Since $\mathcal{H}^{\infty}_{+}$ is a Fr\'echet-Schwartz space \cite{Bonet},  hence a Montel space, there is no distinction between being compact or bounded for a continuous linear operator on $\mathcal{H}^{\infty}_{+}$.

\begin{theorem}\label{contdiff}
(i) The differentiation operator $D: \mathcal{H}^{\infty}_{+} \rightarrow \mathcal{H}^{\infty}_{+}$, $D(f):=f'$,
is continuous. Its image $D(\mathcal{H}^{\infty}_{+})$ coincides with $\mathcal{H}^{\infty}_{+,0}$.

(ii) The operator $J: \mathcal{H}^{\infty}_{+,0} \rightarrow \mathcal{H}^{\infty}_{+,0}$ given by
$$
J(\sum_{n=2}^{\infty} a_n n^{-s}):= - \sum_{n=2}^{\infty} \frac{a_n}{n^s \log n}
$$
is continuous and it satisfies $DJ(f)=f=JD(f)$ for each $f \in \mathcal{H}^{\infty}_{+,0}$.

(iii) Neither $D$ nor $J$ is a compact operator.
\end{theorem}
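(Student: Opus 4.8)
The plan is to reduce everything to three ingredients: term-by-term differentiation and antidifferentiation of Dirichlet series, the closed-graph criterion of Lemma~\ref{lemma1}, and the fact that $\mathcal{H}^{\infty}_{+}$ is Montel. The only point that needs genuine work is checking that $D$ and $J$ land in $\mathcal{H}^{\infty}_{+}$ at all. For $f(s)=\sum_{n\ge 1}a_nn^{-s}\in\mathcal{H}^{\infty}_{+}$ term-by-term differentiation (recalled in the introduction) gives $Df(s)=-\sum_{n\ge 2}a_n(\log n)n^{-s}$ on $\C_0$; to see that this series converges uniformly on $\C_\eps$ for each $\eps>0$ I would fix $\eps>0$ and split, for $\re s>\eps$,
\[
\frac{a_n\log n}{n^s}=\frac{a_n}{n^{\,s-\eps/2}}\cdot\frac{\log n}{n^{\eps/2}}.
\]
The partial sums of $\sum_n a_nn^{-(s-\eps/2)}$ converge uniformly on $\C_\eps$ because $\sum_n a_nn^{-w}$ converges uniformly on $\C_{\eps/2}\supseteq\C_\eps$, while $y_n:=(\log n)n^{-\eps/2}$ is a bounded real sequence of bounded variation; Lemma~\ref{Abeltest}(ii) then gives uniform convergence of the product series on $\C_\eps$. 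Hence $Df\in\mathcal{H}^{\infty}_{+}$, and as the $n=1$ term has disappeared, $Df\in\mathcal{H}^{\infty}_{+,0}$, so $D(\mathcal{H}^{\infty}_{+})\subseteq\mathcal{H}^{\infty}_{+,0}$.

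For $g(s)=\sum_{n\ge 2}a_nn^{-s}\in\mathcal{H}^{\infty}_{+,0}$ the argument is easier: writing $\frac{a_n}{n^s\log n}=\frac{a_n}{n^s}\cdot\frac1{\log n}$ and using that the partial sums of $\sum_n a_nn^{-s}$ converge uniformly on $\C_\eps$ together with the monotone bounded sequence $y_n=1/\log n$, Lemma~\ref{Abeltest}(i) yields $Jg\in\mathcal{H}^{\infty}_{+,0}$. Continuity is then immediate from Lemma~\ref{lemma1}: for $J$ the multiplier $\gamma_n=-1/\log n$ satisfies $\log|\gamma_n|/\log n=-\log\log n/\log n\to 0$; for $D$ one writes $D=\widetilde T\circ P$, where $P(f)=f-a_1$ is the projection onto $\mathcal{H}^{\infty}_{+,0}$, continuous since $u_1$ is continuous, and $\widetilde T$ is the multiplier operator on $\mathcal{H}^{\infty}_{+,0}$ with $\gamma_n=-\log n$, for which $\log|\gamma_n|/\log n=\log\log n/\log n\to 0$. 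Evaluating on the Schauder basis, $De_n=-(\log n)e_n$ and $Je_n=-\frac1{\log n}e_n$ for $n\ge 2$, so $DJe_n=e_n=JDe_n$, and by continuity together with density of $\spanned\{e_n:n\ge 2\}$ this gives $DJ=JD=\id$ on $\mathcal{H}^{\infty}_{+,0}$; in particular $D(Jg)=g$, which combined with the inclusion above yields $D(\mathcal{H}^{\infty}_{+})=\mathcal{H}^{\infty}_{+,0}$ and finishes (i) and (ii).

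For (iii), since $\mathcal{H}^{\infty}_{+}$ is Fr\'echet--Schwartz, hence Montel, it suffices to show that $D$ and $J$ are not bounded. A decreasing neighbourhood basis of $0$ is $V_m=\{f:P_{1/m}(f)\le 1\}$, $m\in\N$ (the seminorms $P_{1/m}$ increase with $m$), and $P_\eps(e_n)=n^{-\eps}$, so $n^{1/m}e_n\in V_m$ for every $n$; then
\[
P_{1/m}\bigl(D(n^{1/m}e_n)\bigr)=n^{1/m}(\log n)n^{-1/m}=\log n\longrightarrow\infty\qquad(n\to\infty),
\]
so $D(V_m)$ is unbounded in $\mathcal{H}^{\infty}_{+}$ for every $m$ and $D$ is not bounded. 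For $J$ one instead takes $k>m$ and gets $P_{1/k}\bigl(J(n^{1/m}e_n)\bigr)=n^{1/m-1/k}/\log n\to\infty$ since $1/m-1/k>0$; alternatively, were $J$ compact, then $\id_{\mathcal{H}^{\infty}_{+,0}}=D|_{\mathcal{H}^{\infty}_{+,0}}\circ J$ would be compact, forcing $\mathcal{H}^{\infty}_{+,0}$ to be finite-dimensional, which is absurd.

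I expect the mapping property to be the only real obstacle: that the differentiated series still converges uniformly on every $\C_\eps$ is not automatic — $D$ is precisely the operator that fails to preserve the Banach space $\mathcal{H}^{\infty}$ because of the factor $\log n$ — and it works only because passing from $\C_{\eps/2}$ to $\C_\eps$ produces the factor $n^{-\eps/2}$ that absorbs $\log n$, after which Abel's test applies. The closed-graph continuity, the inverse identities, and the non-compactness via the Dirichlet monomials should all be routine.
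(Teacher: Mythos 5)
Your proof is correct, and its core --- checking that $Df$ and $Jg$ remain in the space --- is exactly the paper's argument: both proofs absorb the factor $\log n$ by shifting the abscissa (your $n^{-\eps/2}$, the paper's $n^{-\delta}$) and then apply Abel's test, Lemma \ref{Abeltest}, and both use Lemma \ref{lemma1} for the continuity of $J$ and obtain surjectivity of $D$ from $DJ=\id$. The differences are peripheral but worth noting. For the continuity of $D$ the paper applies the closed graph theorem directly, using the continuous inclusion $\mathcal{H}^{\infty}_{+}\subset H(\C_0)$ and the continuity of differentiation on $H(\C_0)$, whereas you factor $D=\widetilde T\circ P$ through the projection killing $a_1$ and invoke Lemma \ref{lemma1} for the multiplier $\gamma_n=-\log n$; both are closed-graph arguments, yours routed through the coefficient functionals. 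For (iii) the paper argues softly: by (ii), $D$ and $J$ restrict to isomorphisms of $\mathcal{H}^{\infty}_{+,0}$, and a compact isomorphism would produce a bounded $0$-neighbourhood, making this Montel space normable and hence finite-dimensional; your computation with the vectors $n^{1/m}e_n$ (using $P_\eps(e_n)=n^{-\eps}$) shows directly that $D(V_m)$ and $J(V_m)$ are unbounded, which is more concrete and does not even need the Montel property, since compact always implies bounded. The identities $DJ(f)=f=JD(f)$ the paper takes as immediate from termwise differentiation; your basis-density argument is a harmless detour. The only micro-point to add: when applying Lemma \ref{Abeltest}(ii) you should justify that $y_n=(\log n)n^{-\eps/2}$ has bounded variation (it is eventually decreasing to $0$, so the total variation is finite); the paper sidesteps this by starting the series at $n(\delta)$, where the sequence is already monotone, and using part (i) of the lemma instead.
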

\begin{proof}
(i) The differentiation operator $D: H(\C_0) \rightarrow H(\C_0)$ is continuous. Since the inclusion
$\mathcal{H}^{\infty}_{+} \subset H(\C_0)$ is continuous, by the closed graph theorem, to prove that $D: \mathcal{H}^{\infty}_{+} \rightarrow \mathcal{H}^{\infty}_{+}$ is continuous, it is enough to show that $D(f) \in \mathcal{H}^{\infty}_{+}$ for each $f \in \mathcal{H}^{\infty}_{+}$.

Fix $f(s)= \sum_{n=1}^{\infty} a_n n^{-s} \in \mathcal{H}^{\infty}_{+}$. For each $\eps>0$ the series $\sum_{n=1}^{\infty} a_n n^{-s}$ converges uniformly on $\C_{\eps}$. For each $0 < \delta < 1$ there is $n(\delta) \in \N$ such that the real sequence $(n^{-\delta} \log n)_n$ is decreasing for $n \geq n(\delta)$. We apply Lemma \ref{Abeltest} (i) to conclude that the series
$$
\sum_{n=n(\delta)}^{\infty} \frac{a_n \log n}{n^{s+\delta}}
$$
converges uniformly on $\C_{\eps}$. Therefore $\sum_{n=1}^{\infty} \frac{a_n \log n}{n^{s}}$ converges uniformly on $\C_{\eps + \delta}$ for each $0 < \delta < 1$. Thus $\sigma_u\big( \sum_{n=1}^{\infty} \frac{a_n \log n}{n^{s}} \big) \leq \eps$. Since $\eps >0$ is arbitrary, we get $\sigma_u\big( \sum_{n=1}^{\infty} \frac{a_n \log n}{n^{s}} \big) \leq 0$ and $D(f)\in \mathcal{H}^{\infty}_{+}$.

It is clear that $D(\mathcal{H}^{\infty}_{+}) \subset \mathcal{H}^{\infty}_{+,0}$. The equality follows from the proof of part (ii) given below.

(ii) By Lemma \ref{lemma1}, it is enough to show that $J(\mathcal{H}^{\infty}_{+,0}) \subset \mathcal{H}^{\infty}_{+,0}$. The identities $DJ(f)=f=JD(f)$ for each $f \in \mathcal{H}^{\infty}_{+,0}$ follow directly from the definitions.

Fix $\sum_{n=2}^{\infty} a_n n^{-s} \in \mathcal{H}^{\infty}_{+,0}$. The series $\sum_{n=2}^{\infty} a_n n^{-s}$ converges uniformly on $\C_{\eps}$ for each $\eps>0$. Since the real sequence $(1/\log n)_{n=2}^{\infty}$ is decreasing, we can apply Lemma \ref{Abeltest} (i) to conclude that $\sum_{n=2}^{\infty} \frac{a_n}{n^s \log n}$ converges uniformly on $\C_{\eps}$. Thus $J(\sum_{n=2}^{\infty} a_n n^{-s}) \in \mathcal{H}^{\infty}_{+,0}$.

(iii) By part (ii) both $D$ and $J$ are isomorphisms from $\mathcal{H}^{\infty}_{+,0}$ into itself. If they were compact, this space would have a bounded $0$-neighbourhood, hence it would be normable. But $\mathcal{H}^{\infty}_{+,0}$ is a Fr\'echet Schwartz space, hence Montel, and it would be finite-dimensional, which is not the case.
\end{proof}

According to Theorem \ref{contdiff}, the operator $J$ can be considered as the integration operator on $\mathcal{H}^{\infty}_{+,0}$. In fact, it acts integrating term by term the Dirichlet series from $n=2$ on. \\

Volterra type operators can be defined in our context as follows.  Given a Dirichlet series $g(s)= \sum_{n=1}^{\infty} a_n n^{-s}$ with abscissa of convergence $\sigma_c(g) \leq 0$, we have $\sigma_a(g') \leq 1$ by Theorems 11.10 and 11.12 in \cite{Apostol}, hence $\sigma_a(g'f) \leq 1$ for each $f \in \mathcal{H}^{\infty}_{+}$. Moreover the first coefficient of $g'f$ is $0$, hence formally $J(g'f)$ is a well-defined Dirichlet series with abscissa of absolute convergence less or equal than 1. Therefore the operator $V_g(f):=J(g'f)$ is linear and maps each $f \in \mathcal{H}^{\infty}_{+}$ into a Dirichlet series $V_g(f)=J(g'f)$ which converges at least on $\C_1$. Deep results about Volterra operators on Hardy spaces of Dirichlet series are due to Brevig, Perfekt and Seip \cite{BrevigPS}. We have the following characterization for this operator.

\begin{corollary}\label{volterra}
Let $g(s)= \sum_{n=1}^{\infty} a_n n^{-s}$ be a Dirichlet series with abscissa of convergence $\sigma_c(g) \leq 0$. The Volterra operator $V_g$ maps $\mathcal{H}^{\infty}_{+}$ into itself (hence it is continuous) if and only if $g \in \mathcal{H}^{\infty}_{+}$.
\end{corollary}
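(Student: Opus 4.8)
The plan is to prove both implications of the equivalence. The easy direction is sufficiency: if $g \in \mathcal{H}^{\infty}_{+}$, then $g' \in \mathcal{H}^{\infty}_{+}$ by Theorem \ref{contdiff}(i), and since $\mathcal{H}^{\infty}_{+}$ is a (multiplicatively convex) Fréchet algebra under pointwise multiplication, $g'f \in \mathcal{H}^{\infty}_{+}$ for every $f \in \mathcal{H}^{\infty}_{+}$. The first Dirichlet coefficient of $g'f$ is zero because $g'$ has vanishing first coefficient, so $g'f \in \mathcal{H}^{\infty}_{+,0}$, and then $V_g(f) = J(g'f) \in \mathcal{H}^{\infty}_{+,0} \subset \mathcal{H}^{\infty}_{+}$ by the continuity of $J$ established in Theorem \ref{contdiff}(ii). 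Thus $V_g$ maps $\mathcal{H}^{\infty}_{+}$ into itself; continuity is then automatic, either by the closed graph theorem (using continuity of the coefficient functionals and of the inclusion $\mathcal{H}^{\infty}_{+} \subset H(\C_0)$, exactly as in the proof of Lemma \ref{lemma1}), or simply because $V_g = J \circ M_{g'}$ is a composition of continuous maps.

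For necessity, assume $V_g(\mathcal{H}^{\infty}_{+}) \subset \mathcal{H}^{\infty}_{+}$; I must deduce $g \in \mathcal{H}^{\infty}_{+}$. The idea is to evaluate $V_g$ at the constant function $f \equiv 1 = e_1$, which lies in $\mathcal{H}^{\infty}_{+}$. Then $V_g(e_1) = J(g' \cdot 1) = J(g')$. Now $g' = -\sum_{n=2}^{\infty} a_n (\log n)\, n^{-s}$, so by the definition of $J$ we get $J(g') = -\sum_{n=2}^{\infty} \frac{a_n \log n}{n^s \log n} = \sum_{n=2}^{\infty} (-a_n) \cdot \frac{-1}{\log n} \cdot \log n \, n^{-s}$... more cleanly, $J(g')(s) = \sum_{n=2}^{\infty} a_n n^{-s} = g(s) - a_1$. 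Hence $V_g(e_1) = g - a_1 e_1$, and by hypothesis this belongs to $\mathcal{H}^{\infty}_{+}$. Since $a_1 e_1$ (a constant) is trivially in $\mathcal{H}^{\infty}_{+}$, we conclude $g \in \mathcal{H}^{\infty}_{+}$.

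The argument is essentially bookkeeping: the only point requiring care is the identification $J(g') = g - a_1 e_1$, i.e.\ that $J$ and $D$ are genuinely inverse on the level of the coefficients indexed by $n \geq 2$, together with the observation that the formal Dirichlet series $V_g(f)$, a priori only known to converge on $\C_1$, actually coincides with the element of $\mathcal{H}^{\infty}_{+}$ claimed by the hypothesis — this follows from uniqueness of Dirichlet series representations (the coefficient functionals $u_n$ separate points). I do not anticipate a real obstacle here; the substance of the corollary is already contained in Theorem \ref{contdiff} and in the algebra structure of $\mathcal{H}^{\infty}_{+}$ recalled from \cite{Bonet}, and the role of this statement is to package those facts into a clean Volterra-operator criterion parallel to the classical one.
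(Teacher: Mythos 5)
Your proposal is correct and follows essentially the same route as the paper: sufficiency via $g' \in \mathcal{H}^{\infty}_{+,0}$, the algebra property of $\mathcal{H}^{\infty}_{+}$ giving $g'f \in \mathcal{H}^{\infty}_{+,0}$, and Theorem \ref{contdiff}(ii) for $J$; necessity by evaluating at the constant function $\mathbf{1}$ to get $V_g(\mathbf{1}) = J(g') = g - a_1 \in \mathcal{H}^{\infty}_{+}$. The extra remarks you add (continuity via the closed graph theorem or the factorization $V_g = J \circ M_{g'}$, and the identification of the formal series through the coefficient functionals) are consistent with, and only slightly more explicit than, the paper's argument.
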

\begin{proof}
Let $\textbf{1}(s)=1, s \in \C,$ be the constant function. Assume that  $V_g(\textbf{1}) \in  \mathcal{H}^{\infty}_{+}$. Then
$$
V_g(\textbf{1})=J(g')= g - a_1 \in \mathcal{H}^{\infty}_{+}.
$$
This implies $g \in \mathcal{H}^{\infty}_{+}$.

Conversely, if $g \in \mathcal{H}^{\infty}_{+}$, then $g' \in \mathcal{H}^{\infty}_{+,0}$ by Theorem \ref{contdiff} (i). Hence $g'f \in \mathcal{H}^{\infty}_{+,0}$ for each $f \in \mathcal{H}^{\infty}_{+}$ by Theorem 2.6 (1) in \cite{Bonet} and $V_g(f)=J(g'f) \in \mathcal{H}^{\infty}_{+,0}$ by Theorem \ref{contdiff} (ii).
\end{proof}

Let $T:X \rightarrow X$ be a continuous linear operator on a Fr\'echet space $X$. We write $T \in \mathcal{L}(X)$. The \textit{resolvent set} $\rho(T)$ of $T$ consists of all $\lambda\in\C$ such that $R(\lambda,T):=(\lambda I- T)^{-1}$ is a continuous linear operator, that is $\lambda I - T: X \rightarrow X$ is bijective and has a continuous inverse. Here $I$ stands for the identity operator on $X$. The set  $\sigma(T):=\C\setminus \rho(T)$ is called the \textit{spectrum} of $T$. The \textit{point spectrum} $\sigma_{pt}(T)$ of $T$ consists of all $\lambda\in\C$ such that $(\lambda I-T)$ is not injective. If we need to stress the space $X$, then we  write $\sigma(T;X)$, $\sigma_{pt}(T;X)$ and $\rho(T;X)$. Unlike for Banach spaces $X$, it may happen that $\rho(T)=\emptyset$ or that $\rho(T)$ is not open. The spectrum of a compact operator $T$ is necessarily a compact subset of $ \C ,$ \cite[Theorem 9.10.2]{Edwards}, which is either finite or $0$ is the accumulation point of the eigenvalues of $T$.

The following Lemma is well-known.

\begin{lemma}\label{resolventinvertible}
 Let $T$ be a continuous linear bijection from a Fr\'echet space $X$ onto itself. Let $\mu \neq 0$. Then $\mu \in \rho(T,X)$ if and only if $\mu^{-1} \in \rho(T^{-1},X)$.
\end{lemma}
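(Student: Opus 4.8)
The plan is to prove the biconditional $\mu \in \rho(T,X) \iff \mu^{-1} \in \rho(T^{-1},X)$ by producing an explicit algebraic identity relating the resolvent operators $R(\mu,T) = (\mu I - T)^{-1}$ and $R(\mu^{-1},T^{-1}) = (\mu^{-1} I - T^{-1})^{-1}$, and then invoking symmetry. Since $\mu \neq 0$ and $T$ is a continuous linear bijection, $T^{-1} \in \mathcal{L}(X)$ (by the open mapping theorem for Fr\'echet spaces), so $T^{-1}$ is a legitimate operator in $\mathcal{L}(X)$ and the statement is meaningful.

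First I would observe the factorization
$$
\mu I - T = -\mu T (\mu^{-1} I - T^{-1}) = -\mu (\mu^{-1} I - T^{-1}) T,
$$
which holds as an identity in $\mathcal{L}(X)$: multiplying out the right-hand side gives $-\mu(\mu^{-1} T - T T^{-1}) = -T + \mu I$, and the two orderings agree because $T$ and $T^{-1}$ commute. Now suppose $\mu^{-1} \in \rho(T^{-1},X)$, i.e.\ $\mu^{-1} I - T^{-1}$ is a bijection of $X$ with continuous inverse. Then $\mu I - T$ is a composition of three continuous linear bijections ($-\mu I$, $T$, and $\mu^{-1} I - T^{-1}$) in some order, hence is itself a continuous linear bijection of $X$; its inverse
$$
R(\mu,T) = -\mu^{-1} T^{-1} R(\mu^{-1},T^{-1}) = -\mu^{-1} R(\mu^{-1},T^{-1}) T^{-1}
$$
is continuous as a composition of continuous maps. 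This shows $\mu \in \rho(T,X)$.

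For the converse, note that the hypotheses are symmetric: $T^{-1}$ is also a continuous linear bijection of $X$ onto itself, $(T^{-1})^{-1} = T$, and $(\mu^{-1})^{-1} = \mu \neq 0$. So applying the implication just proved with $T$ replaced by $T^{-1}$ and $\mu$ replaced by $\mu^{-1}$ yields $\mu^{-1} \in \rho(T^{-1},X) \Rightarrow \mu \in \rho((T^{-1})^{-1},X) = \rho(T,X)$ — wait, that is the same direction; to get the reverse direction cleanly I would instead simply note that the factorization is symmetric in the pair $(T,\mu)$ and $(T^{-1},\mu^{-1})$, so the argument of the previous paragraph, read with the roles of $T$ and $T^{-1}$ interchanged, gives $\mu \in \rho(T,X) \Rightarrow \mu^{-1} \in \rho(T^{-1},X)$.

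There is no real obstacle here; the only point requiring a word of care is the justification that $T^{-1}$ is continuous (so that both sides of the equivalence refer to genuine operators in $\mathcal{L}(X)$), which is immediate from the open mapping theorem valid in Fr\'echet spaces. The remainder is the routine algebra of the factorization above, together with the standard fact that a composition of topological isomorphisms is a topological isomorphism. This is why the Lemma is labelled as well-known, and a two-line proof along the lines sketched above suffices.
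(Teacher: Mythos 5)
Your proof is correct and takes essentially the same route as the paper: your factorization $\mu I - T = -\mu T(\mu^{-1}I - T^{-1})$ is just a rearrangement of the paper's observation that $S:=-\mu T(\mu I - T)^{-1}$ inverts $\mu^{-1}I - T^{-1}$, and both arguments dispose of the reverse implication by symmetry. (Incidentally, your mid-proof ``wait'' was a substitution slip—applying your proved implication to the pair $(T^{-1},\mu^{-1})$ does give the converse directly, since $(T^{-1})^{-1}=T$ and $(\mu^{-1})^{-1}=\mu$—but the symmetry argument you settle on is valid in any case.)
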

\begin{proof}
By symmetry it is enough to show one implication. If $\mu \in \rho(T,X)$, then the inverse
$(\mu I - T)^{-1}$ exists as a continuous linear operator on $X$. A direct calculation shows that $S:= - \mu T (\mu I - T)^{-1}$ is the inverse of $\mu^{-1} I - T^{-1}$.
\end{proof}

\begin{theorem}\label{spectrum}
\begin{itemize}

\item[(i)] $\sigma(D, \mathcal{H}^{\infty}_{+,0}) = \{ -\log n \ | \ n \in \N, \ n \geq 2 \}$.

\item[(ii)] $\sigma(D, \mathcal{H}^{\infty}_{+}) = \{ 0 \} \cup \{ -\log n \ | \ n \in \N, \ n \geq 2 \}$.

\item[(iii)] $\sigma(J, \mathcal{H}^{\infty}_{+,0}) = \{ -1/\log n \ | \ n \in \N, \ n \geq 2 \}$.
\end{itemize}
\end{theorem}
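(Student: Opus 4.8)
The plan is to compute each spectrum by first identifying the point spectrum and then showing that every other $\lambda$ lies in the resolvent set. For part (i), note that $D(e_n) = -\log n \, e_n$ for $n\ge 2$, so each $-\log n$ is an eigenvalue of $D$ on $\mathcal{H}^{\infty}_{+,0}$; thus $\{-\log n : n\ge 2\}\subset \sigma_{pt}(D,\mathcal{H}^{\infty}_{+,0})\subset \sigma(D,\mathcal{H}^{\infty}_{+,0})$. Since the spectrum is closed and $-\log n \to -\infty$, the closure adds nothing. For the reverse inclusion, fix $\lambda \notin \{-\log n : n\ge 2\}$ and try to invert $\lambda I - D$. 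On a Dirichlet series $f = \sum_{n\ge 2} a_n n^{-s}$ we have $(\lambda I - D)f = \sum_{n\ge 2}(\lambda + \log n) a_n n^{-s}$, so formally $(\lambda I - D)^{-1}$ is the multiplier operator with symbol $\gamma_n = (\lambda + \log n)^{-1}$. Since $\log|\gamma_n|/\log n = -\log|\lambda + \log n|/\log n \to 0$, Lemma~\ref{lemma1} applies: once I check that this multiplier maps $\mathcal{H}^{\infty}_{+,0}$ into itself, Lemma~\ref{lemma1} gives continuity for free, and then $\lambda \in \rho(D,\mathcal{H}^{\infty}_{+,0})$.

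The key analytic step, therefore, is to show that the multiplier with symbol $(\lambda + \log n)^{-1}$ preserves $\mathcal{H}^{\infty}_{+,0}$, i.e.\ that if $\sum_{n\ge2} a_n n^{-s}$ converges uniformly on each $\C_\eps$ then so does $\sum_{n\ge2} (\lambda+\log n)^{-1} a_n n^{-s}$. I would do this by Abel summation (Lemma~\ref{Abeltest}(ii)): write the new coefficients as $a_n \cdot y_n$ with $y_n = (\lambda + \log n)^{-1}$, which is bounded and of bounded variation (indeed $\sum_n |y_n - y_{n+1}| < \infty$ since $y_n$ is eventually monotone with $y_n \to 0$), so one can combine uniform convergence of the partial sums of $\sum a_n n^{-s}$ on $\C_\eps$ with the summation-by-parts criterion. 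This is essentially the same mechanism used in the proof of Theorem~\ref{contdiff}. This settles (i), and by the same reasoning applied to $J$ — whose eigenvalues are $-1/\log n$ on $e_n$, $n\ge 2$, with $(\lambda I - J)^{-1}$ having symbol $(\lambda + 1/\log n)^{-1}$, bounded and summably-varying because $1/\log n \to 0$ — Lemma~\ref{lemma1} and the Abel test give (iii) in exactly the same way: $\sigma(J,\mathcal{H}^{\infty}_{+,0}) = \{-1/\log n : n\ge 2\}$. (Alternatively (iii) follows from (i) via Lemma~\ref{resolventinvertible}, since $J = D^{-1}$ on $\mathcal{H}^{\infty}_{+,0}$ by Theorem~\ref{contdiff}(ii), noting $0\notin\sigma(J)$ because $J$ is a bijection and $0$ is not an eigenvalue of $D$; one must still check $0 \notin \sigma(J)$, which is immediate since $J$ is a continuous bijection with continuous inverse $D$.)

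For part (ii), on the full space $\mathcal{H}^{\infty}_{+}$ the monomial $e_1 = 1$ satisfies $D(e_1) = 0$, so $0$ is an additional eigenvalue, giving $\{0\}\cup\{-\log n : n\ge 2\}\subset \sigma(D,\mathcal{H}^{\infty}_{+})$. Conversely, for $\lambda \ne 0$ and $\lambda \notin \{-\log n\}$, I would invert $\lambda I - D$ by splitting $f = a_1 e_1 + \sum_{n\ge 2} a_n n^{-s}$: the $e_1$-component is handled by division by $\lambda$, and the tail is handled exactly as in part (i) since $\mathcal{H}^{\infty}_{+,0}$ is a complemented (closed) subspace and $D$ respects the decomposition. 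More precisely $(\lambda I - D)^{-1}f = \lambda^{-1} a_1 e_1 + \sum_{n\ge 2}(\lambda+\log n)^{-1} a_n n^{-s}$, and continuity follows from continuity of the coefficient functional $u_1$ together with the already-established continuity on $\mathcal{H}^{\infty}_{+,0}$ (or directly from a closed graph argument using that the inclusion $\mathcal{H}^{\infty}_{+}\hookrightarrow H(\C_0)$ is continuous and $D$ acts nicely there). Hence $\lambda\in\rho(D,\mathcal{H}^{\infty}_{+})$, proving (ii).

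The main obstacle I anticipate is the verification that the multiplier operators land back in $\mathcal{H}^{\infty}_{+,0}$ — everything else (identifying eigenvalues, invoking Lemma~\ref{lemma1} for automatic continuity, closedness of the spectrum) is formal. The delicate point is that uniform convergence on half-planes is not a coefficient condition, so one genuinely needs the Abel/Dirichlet summation-by-parts estimate, and one should be slightly careful that the symbols $(\lambda+\log n)^{-1}$ and $(\lambda+1/\log n)^{-1}$ are not merely bounded but have uniformly bounded partial variation $\sum_{n} |y_n - y_{n+1}|$, which is what Lemma~\ref{Abeltest}(ii) requires; this holds because each symbol is eventually monotone and convergent, but it deserves an explicit line. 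A secondary, purely bookkeeping point is ensuring that for the finitely many small-index terms where monotonicity may fail one simply absorbs them into a finite sum, which is automatically a uniformly convergent (indeed finite) Dirichlet series.
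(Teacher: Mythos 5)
Your proposal is correct and follows essentially the same route as the paper: eigenvalues read off from the monomials $n^{-s}$, the resolvent realized as the diagonal multiplier with symbol $(\lambda+\log n)^{-1}$ (resp.\ $(\lambda+1/\log n)^{-1}$), Lemma~\ref{lemma1} for automatic continuity once the multiplier is shown to preserve $\mathcal{H}^{\infty}_{+,0}$, Lemma~\ref{Abeltest}\,(ii) for that preservation, and Lemma~\ref{resolventinvertible} together with invertibility of $J$ for part (iii). The only point to tighten is that for complex $\lambda$ the symbol is not literally \lqq eventually monotone\rqq; justify the bounded variation by the direct estimate $|y_n-y_{n+1}|=\log(1+\tfrac1n)\,|y_n|\,|y_{n+1}|\leq C\,n^{-1}(\log n)^{-2}$, which is the same computation the paper carries out (there with an auxiliary factor $n^{-\delta}$ and a limit $\delta\to 0$), and note in passing that the spectrum need not be closed in this Fr\'echet setting, as (iii) itself shows, though you never actually use closedness.
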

\begin{proof}
(i) By Theorem \ref{contdiff} (ii) we have $0 \notin \sigma(D, \mathcal{H}^{\infty}_{+,0})$ because $D$ is invertible on $\mathcal{H}^{\infty}_{+,0}$.

On the other hand $D(n^{-s}) = (- \log n) n^{-s}$ for each $n \in \N, n \geq 2$, hence $ - \log n$ belongs to $\sigma_{pt}(D, \mathcal{H}^{\infty}_{+})$ and to $\sigma_{pt}(D, \mathcal{H}^{\infty}_{+,0})$ for each $n \in \N, n \geq 2$.

It remains to show that every $\lambda \in \C, \lambda \neq 0$ such that $\lambda \notin \{ -\log n \ | \ n \in \N, \ n \geq 2 \}$ satisfies $\lambda \in \rho(D, \mathcal{H}^{\infty}_{+,0})$. We fix  $\lambda \in \C$ with $\lambda \neq 0$ and $\lambda \neq - \log n$ for each $n \in \N, n \geq 2$. There is $\mu >0$ such that $|\lambda| > \mu$ and $|\log n + \lambda | > \mu$ for each $n \in \N, n \geq 2$. We have
$$
(\lambda I - D)\big(\sum_{n=2}^{\infty} \frac{a_n}{n^s} \big) = \sum_{n=2}^{\infty} \frac{(\lambda + \log n) a_n}{n^s}.
$$
Accordingly, the formal inverse of $(\lambda I - D)$ on $\mathcal{H}^{\infty}_{+,0}$ is given by
$$
S\big(\sum_{n=2}^{\infty} \frac{b_n}{n^s} \big):=  \sum_{n=2}^{\infty} \frac{b_n}{(\log n + \lambda) n^s}.
$$
By Lemma \ref{lemma1} it is enough to prove that $\sigma_u\big(\sum_{n=2}^{\infty} \frac{b_n}{n^s} \big) \leq 0$ implies $\sigma_u\big(\sum_{n=2}^{\infty} \frac{b_n}{(\log n + \lambda) n^s}\big) \leq 0$.  To see this assume that $\sum_{n=2}^{\infty} \frac{b_n}{n^s}$ is uniformly convergent on $\C_{\eps}$. We prove that $\sum_{n=2}^{\infty} \frac{b_n}{(\log n + \lambda)n^{s+\delta}}$ is also uniformly convergent on $\C_{\eps}$ for each $0<\delta<1$.

The sequence $\gamma_n:= \frac{1}{(\log n + \lambda)n^{\delta}}, \ n \in \N, n \geq 2$ is bounded. If we show that $$\sum_{n=2}^{\infty} |\gamma_n - \gamma_{n+1}| < \infty,$$ we can apply Lemma \ref{Abeltest} (ii) to conclude that $$\sum_{n=2}^{\infty} \gamma_n \frac{b_n}{n^{s}} = \sum_{n=2}^{\infty} \frac{b_n}{(\log n + \lambda)n^{s+\delta}}$$ converges uniformly on $\C_{\eps}$. We get, for $n \in \N, n \geq 2$,
$$
|\gamma_n - \gamma_{n+1}| \leq \frac{1}{\mu^2}\frac{|(\log (n+1) + \lambda) (n+1)^\delta - (\log n + \lambda)n^{\delta} |}{n^{\delta}(n+1)^{\delta}}.
$$
Hence
$$
\mu^2 |\gamma_n - \gamma_{n+1}| \leq \frac{(\log (n+1))  (n+1)^\delta - (\log n) n^{\delta}}{n^{\delta}(n+1)^{\delta}} + |\lambda|\big(\frac{1}{n^{\delta}} - \frac{1}{(n+1)^\delta}   \big) \leq
$$
$$
\leq \frac{(n+1)^{\delta-1}(1 + \delta \log(n+1))}{n^{\delta}(n+1)^{\delta}} + \frac{|\lambda| \delta}{n^{\delta + 1}} \leq \frac{C}{n^{1+\delta/2}},
$$
for a constant $C>0$ independent of $n$. This implies
$$
\sum_{n=2}^{\infty} |\gamma_n - \gamma_{n+1}| \leq \sum_{n=2}^{\infty} \frac{C}{\mu^2 n^{1+\delta/2}}< \infty.
$$

(ii) Clearly $0 \in \sigma(D, \mathcal{H}^{\infty}_{+})$ since the operator $D: \mathcal{H}^{\infty}_{+} \rightarrow \mathcal{H}^{\infty}_{+}$ is not surjective by Theorem \ref{contdiff} (i). The rest of the proof is similar to the one of part (i). Note that the formal inverse of $(\lambda I - D)$ on $\mathcal{H}^{\infty}_{+}$ for $\lambda \notin \{ 0 \} \cup \{ -\log n \ | \ n \in \N, \ n \geq 2 \}$ is given by
$$
S\big(\sum_{n=1}^{\infty} \frac{b_n}{n^s} \big):= \frac{b_1}{\lambda} + \sum_{n=2}^{\infty} \frac{b_n}{(\log n + \lambda) n^s}.
$$

(iii) First of all $0 \notin \sigma(J, \mathcal{H}^{\infty}_{+,0})$ because $J$ is invertible on $\mathcal{H}^{\infty}_{+,0}$ by Theorem \ref{contdiff} (ii). The rest of the statement follows from part (i) and Lemma \ref{resolventinvertible}.
\end{proof}

An operator  $T \in \mathcal{L}(X)$ is called \textit{power bounded}  if the sequence of iterates $\{T^k\}_{k=1}^\infty$ is an equicontinuous subset of $\mathcal{L}(X)$.   Given $T\in \mathcal{L}(X)$, the averages
$$ \textstyle
    T_{[k]} := \frac 1 k \sum^k_{m=1} T^m, \qquad      k\in \N,
$$
of the iterates of $ T $  are called the  Ces\`aro means of $T$.
The operator  $T$ is said to be  \textit{mean ergodic}  if  $\{T_{[k]}f \}^\infty_{k=1}$
is a convergent sequence in $X$ for every $f \in X$,  \cite{Krengel}.
If $T$ is power bounded or mean ergodic, then $\lim_{k \rightarrow \infty} (1/k) T^k f = 0$ for each $f \in X$.
An  operator $T\in \mathcal{L}(X)$, with $X$ separable, is called \textit{hypercyclic} if there exists $x\in X$
such that the orbit $\{T^kx\colon k\in\N_0\}$ is dense in $X, $   where $ \N_0 := \N \cup \{ 0 \} . $
If, for some $z\in X,$ the projective orbit
$\{\lambda T^k z\colon \lambda\in\C,\ k\in\N_0 \}$ is dense in $X$, then $T$ is called \textit{supercyclic}. Clearly, hypercyclicity  implies supercyclicity.

\begin{proposition}\label{ergodic}
\begin{itemize}

\item[(i)] The operator $D: \mathcal{H}^{\infty}_{+,0} \rightarrow \mathcal{H}^{\infty}_{+,0}$ is not power bounded, not mean ergodic and not supercyclic.

\item[(ii)] The operator $J: \mathcal{H}^{\infty}_{+,0} \rightarrow \mathcal{H}^{\infty}_{+,0}$ is not power bounded, not mean ergodic and not supercyclic.
\end{itemize}
\end{proposition}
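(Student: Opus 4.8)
The plan is to exploit the spectral information from Theorem \ref{spectrum} together with the elementary eigenvector structure of $D$ and $J$ on the Dirichlet monomials $e_n(s) = n^{-s}$, $n \geq 2$. The key point is that $D(e_n) = (-\log n)\, e_n$ and $J(e_n) = (-1/\log n)\, e_n$, so $e_n$ is an eigenvector of $D$ with eigenvalue $-\log n$ and of $J$ with eigenvalue $-1/\log n$. Since $|\!\log n| \to \infty$ and $\{e_n\}$ is a Schauder basis with continuous coefficient functionals, these eigenvalues being unbounded (for $D$) or clustering at $0$ but with $|\!-1/\log n| < 1$ strictly bounded away from nothing useful in the wrong direction (for $J$) will give the obstructions.

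First I would handle power boundedness. For $D$: if $\{D^k\}_k$ were equicontinuous, then $\{D^k e_2\}_k = \{(-\log 2)^k e_2\}_k$ would be bounded in $\mathcal{H}^{\infty}_{+,0}$; but $\log 2 < 1$... wait — $\log 2 \approx 0.693 < 1$, so actually $(-\log 2)^k \to 0$. Instead I would test on $e_n$ for $n$ large: $\{D^k e_n : k \in \N\}$ contains $(-\log n)^k e_n$ with $\log n > 1$ once $n \geq 3$, so $P_\eps(D^k e_n) = (\log n)^k P_\eps(e_n) \to \infty$; hence no single neighbourhood of $0$ is mapped by all $D^k$ into a bounded set, and $D$ is not power bounded. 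For $J$: here $|{-1/\log n}| < 1$ for all $n \geq 2$, so iterates on each fixed $e_n$ stay bounded; power boundedness of $J$ is therefore \emph{not} excluded this way, and I must look more carefully — but in fact, since $1/\log 2 > 1$ (because $\log 2 < 1$), we have $J^k e_2 = (-1/\log 2)^k e_2$ with $1/\log 2 \approx 1.44 > 1$, so $P_\eps(J^k e_2) = (1/\log 2)^k P_\eps(e_2) \to \infty$, and $J$ is not power bounded either.

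Next, mean ergodicity. I would use the standard fact recalled in the text: if $T$ is mean ergodic (or power bounded), then $\tfrac{1}{k} T^k f \to 0$ for every $f$. Applying this to $f = e_n$: for $D$, $\tfrac{1}{k} D^k e_n = \tfrac{(-\log n)^k}{k} e_n$, and for $n \geq 3$ this does not converge to $0$ in $\mathcal{H}^{\infty}_{+,0}$ (test against the continuous coefficient functional $u_n$, or use any $P_\eps$, since $(\log n)^k / k \to \infty$). For $J$, take $f = e_2$: $\tfrac{1}{k} J^k e_2 = \tfrac{(-1/\log 2)^k}{k} e_2$ and $(1/\log 2)^k/k \to \infty$ since $1/\log 2 > 1$. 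In both cases the necessary condition fails, so neither operator is mean ergodic.

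Finally, supercyclicity. The cleanest route is to invoke the structure of the point spectrum together with the fact that the dual eigenvectors separate points. A continuous operator $T$ on a Fr\'echet space with a supercyclic vector $z$ has the property that $T^*$ has at most one eigenvalue (indeed, if $\varphi_1, \varphi_2$ are eigenvectors of $T^*$ for distinct eigenvalues $\mu_1, \mu_2$, then the quotient $\langle T^k z, \varphi_1\rangle / \langle T^k z, \varphi_2\rangle$ takes only finitely many... actually the standard argument: the map $x \mapsto (\langle x,\varphi_1\rangle, \langle x,\varphi_2\rangle)$ sends the projective orbit of $z$ into $\{(\mu_1^k \langle z,\varphi_1\rangle t, \mu_2^k \langle z,\varphi_2\rangle t) : t \in \C, k \in \N_0\}$, which is not dense in $\C^2$ when $|\mu_1| \neq |\mu_2|$, contradicting that the projective orbit is dense). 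For $D$ (resp. $J$) on $\mathcal{H}^{\infty}_{+,0}$, the coefficient functionals $u_n$ satisfy $D^* u_n = (-\log n) u_n$ (resp. $J^* u_n = (-1/\log n) u_n$), giving infinitely many eigenvalues of distinct modulus for the adjoint; hence neither $D$ nor $J$ is supercyclic. \textbf{The main obstacle} I anticipate is making the supercyclicity argument rigorous in the Fr\'echet (non-Banach) setting: one must check that $T^*$ genuinely has these eigenvectors (immediate, since $u_n \circ D = (-\log n) u_n$ as functionals) and that the two-functional projective-orbit argument goes through verbatim — it does, because it only uses continuity of $u_n, u_m$ and density of the projective orbit, not completeness or normability. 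The power-boundedness and mean-ergodicity parts are routine once one picks the right monomial $e_n$ (with $\log n > 1$ for $D$, and $e_2$ with $1/\log 2 > 1$ for $J$).
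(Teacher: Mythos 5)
Your proposal is correct and takes essentially the same approach as the paper: the same eigenvector computations on the Dirichlet monomials (a monomial $n^{-s}$ with $\log n>1$ for $D$, and $2^{-s}$ for $J$) rule out power boundedness and mean ergodicity via the criterion $\tfrac1k T^k f \to 0$, and the eigenvalue equations $D'(u_n)=(-\log n)u_n$, $J'(u_n)=(-1/\log n)u_n$ for the coefficient functionals block supercyclicity. The only difference is that where the paper cites Theorem 2.1 of Bourdon--Feldman--Shapiro (supercyclic $=$ 1-supercyclic), you prove the needed obstruction directly by the two-functional projective-orbit argument, which is fine here since the relevant adjoint eigenvalues have pairwise distinct moduli (just also record the trivial non-density when the candidate supercyclic vector is annihilated by one of the two functionals).
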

\begin{proof}
(i) For each $k \in \N$ we have $D^k(3^{-s}) = (-1)^k(\log 3)^k 3^{-s}$. This implies, for each $\eps>0$ and each $k \in \N$,
$$
P_{\eps}(\frac{1}{k}D^k(3^{-s}))=\sup_{s \in \C_\eps} |\frac{1}{k}D^k(3^{-s})| = \frac{(\log 3)^k}{k 3^{\eps}}.
$$
Since $\log 3 > 1$, the sequence $(\frac{1}{k}D^k(3^{-s}))_{k=1}^{\infty}$ is unbounded in $\mathcal{H}^{\infty}_{+}$ and $D$ is neither power bounded nor mean ergodic on  $\mathcal{H}^{\infty}_{+,0}$.

If $u_n(\sum_{n=1}^{\infty} a_n n^{-s}):= a_n, n \in \N,$ denotes the coefficient functionals of the basis in $\mathcal{H}^{\infty}_{+}$, then the transposed operator $D'$ of $D$ satisfies $D'(u_n)= (- \log n) u_n$ for each $n \in \N$, and $D'$ has infinitely many linearly independent eigenvectors. Since supercyclic is the same as being 1-supercyclic in the sense of \cite{BourdonFS}, it follows from Theorem 2.1 of \cite{BourdonFS} that $D$ is not supercyclic.

(ii) We have $J^k(2^{-s})= (-1)^k (1/ \log 2)^k 2^{-s}$ for each $k \in  \N$. Therefore, for each $\eps>0$ and each $k \in \N$, we get
$$
P_{\eps}(\frac{1}{k}J^k(2^{-s}))=\sup_{s \in \C_\eps} |\frac{1}{k}J^k(2^{-s})| =\frac{1}{k (\log 2)^k 2^{\eps}}.
$$
Since $0 < \log 2 < 1$, the sequence $(\frac{1}{k}J^k(2^{-s}))_{k=1}^{\infty}$ is unbounded in $\mathcal{H}^{\infty}_{+}$ and $J$ is not power bounded and not mean ergodic on  $\mathcal{H}^{\infty}_{+,0}$.

The fact that $J$ is not supercyclic on $\mathcal{H}^{\infty}_{+,0}$ follows similarly as in the case of $D$, since the transposed operator $J'$ of $J$ satisfies $J'(u_n)=(- 1/ \log n) u_n$ for each $n \in \N, n \geq 2$.
\end{proof}

\begin{remark}
Observe that the linear dynamics of the differentiation operator $D$ on $\mathcal{H}^{\infty}_{+,0}$ is different from the behaviour of $D$ on the space $H(\C_0)$.
In this case it is well-known that $D$ is hypercyclic; see Section 4.2 in \cite{GrosseP}.
\end{remark}

\textbf{Acknowledgement.} The research of this paper was partially
supported by the projects MTM2016-76647-P and GV Prometeo/2017/102.



\begin{thebibliography}{10}

\bibitem{AlbaneseBR} A.A. Albanese, J. Bonet, W.J. Ricker: Mean ergodic operators in Fr\'echet spaces, Ann. Acad. Sci. Fenn. Math. 34 (2009), 401--436.

\bibitem{Apostol} T.M. Apostol: Introduction to Analytic Number Theory. Springer-Verlag, New York-Heidelberg, 1976.

\bibitem{Boas_Football} H.P.\ Boas: The football player and the infinite series,   Notices of the Amer. Math. Soc. 44 (1997) 1430--1435.

\bibitem{Bohr_Crelle}  H. Bohr: \"Uber die gleichm\"a\ss ige Konvergenz Dirichletscher Reihen, J. Reine Angew. Math. 143 (1913), 203�211.

\bibitem{Bonet} J. Bonet: The Fr\'echet Schwartz algebra of uniformly convergent Dirichlet series, Proc. Edinburgh Math. Soc. 61 (2018), 933-942.

\bibitem{BourdonFS} P.S. Bourdon, N.S. Feldman, J.H. Shapiro: Some properties of N-supercyclic operators, Studia Math. 165 (2004), 135-157.

\bibitem{BrevigPS} O.F. Brevig, K-M Perfekt, K. Seip, Volterra operators on Hardy spaces of Dirichlet series, J. Reine Angew. Math. DOI: 10.1515/crelle-2016-0069.

\bibitem{DefantGMS} A. Defant, D. Garc\'{\i}a, M. Maestre, P. Sevilla-Peris: Bohr's strips for Dirichlet series in Banach spaces, Funct. Approx. Comment. Math. 44 (2011), 165�-189.

\bibitem{DefantGMS_book} A. Defant, D. Garc\'{\i}a, M. Maestre, P. Sevilla-Peris: Dirichlet Series and Holomorphic Functions in High Dimensions, Cambridge Univ. Press, Cambridge, 2019.

\bibitem{Edwards} R.E. Edwards: Functional Analysis, Theory and Applications, Holt, Rinehart and Winston, New York Chicago San Francisco, 1965.

\bibitem{GrosseP} K.-G. Grosse-Erdmann, A. Peris: Linear Chaos, Springer, London, 2011.

\bibitem{Hedenmalm} H.  Hedenmalm: Dirichlet series and functional analysis, pp. 673--684 in The legacy of Niels Henrik Abel,  Springer, Berlin, 2004.

\bibitem{Helson} H. Helson: Dirichlet series. Henry Helson, Berkeley, CA, 2005.

\bibitem{Jarchow} H. Jarchow: Locally Convex Spaces, B.G. Teubner, Stuttgart, 1981.

\bibitem{Krengel} U. Krengel:  Ergodic Theorems,  de Gruyter Studies in Mathematics, \textbf{6}. Walter de Gruyter Co., Berlin, 1985.


\bibitem{Meise_Vogt} R. Meise and D. Vogt.: Introduction to Functional Analysis,  The Clarendon Press Oxford University Press, New York, 1997.

\bibitem{Stromberg} K.R. Stromberg: An Introduction to Classical Real Analysis, Chapman Hall, London, 1981.

\bibitem{Queffelec} H. Queff\'{e}lec: H. Bohr's vision of ordinary Dirichlet series; old and new
results.  J. Anal. 3 (1995),  43--60.

\bibitem{Queffelec1} H. Queff\'{e}lec: Espaces de s\'eries de Dirichlet et leurs op\'erateurs de composition, Ann. Math. Blaise Pascal 22 (2015), 267--344.

\bibitem{Queffelec_book} H. Queff\'{e}lec, M. Queff\'{e}lec: Diophantine Approximation and Dirichlet Series, Hindustan Book Agency, New Delhi, 2013.



\end{thebibliography}


\noindent \textbf{Author's address:}%
\vspace{\baselineskip}%

Instituto Universitario de Matem\'{a}tica Pura y Aplicada IUMPA,
Universitat Polit\`{e}cnica de Val\`{e}ncia,  E-46071 Valencia, SPAIN

email:jbonet@mat.upv.es

\end{document}